\date{\today}
\def\w{\wedge}
\def\dbar{\bar\partial}
\def\C{{\mathbb C}}
\def\w{{\wedge}}
\def\F{{\mathcal F}}
\def\codim{{\rm codim\,}}
\def\U{{\mathcal U}}
\def\J{{\mathcal J}}
\def\nbh{neighborhood }
\def\be{\begin{equation}}
\def\ee{\end{equation}}
\def\Ok{\mathcal O}
\def\mult{{\rm mult}}
\def\1{{\bf 1}}
\def\m{{\mathfrak m}}
\newtheorem{thm}{Theorem}[section]
\newtheorem{lma}[thm]{Lemma}
\newtheorem{cor}[thm]{Corollary}
\newtheorem{prop}[thm]{Proposition}
\theoremstyle{definition}
\theoremstyle{remark}
\newtheorem{preremark}[thm]{Remark}
\newtheorem{preex}[thm]{Example}
\newenvironment{remark}{\begin{preremark}}{\qed\end{preremark}}
\newenvironment{ex}{\begin{preex}}{\qed\end{preex}}
\numberwithin{equation}{section}
\title[Green functions,  Segre numbers, and King's formula, 
]
{Green functions, Segre numbers, \\ and King's formula }
\begin{document}
\date{\today}

\author[M.\  Andersson \& E.\  Wulcan]
{Mats  Andersson \& Elizabeth Wulcan}

\address{Department of Mathematics\\Chalmers University of Technology and the University of Gothenburg\\S-412 96 
Gothenburg\\SWEDEN}

\email{matsa@chalmers.se, wulcan@chalmers.se}



\keywords{Green function, Segre numbers, Monge-Amp\`ere products,
  King's formula}

\subjclass{32U35, 32U25, 32U40, 32B30, 14B05}

\thanks{The authors were partially supported by the Swedish 
  Research Council}


\begin{abstract}  Let $\J$ be a coherent ideal sheaf 
on a complex manifold $X$
with zero set $Z$, and let $G$ be a plurisubharmonic function such that
$G=\log|f|+\Ok(1)$ locally at $Z$, where $f$ is a tuple of holomorphic
functions that defines $\J$. We give a meaning to the Monge-Amp\`{e}re
products $(dd^c G)^k$ for $k=0,1,2,\ldots$, and prove that the Lelong
numbers of the currents  $M_k^{\J}:=\1_Z(dd^c G)^k$ at $x$
coincide with the so-called Segre numbers of
$\J$ at $x$, introduced independently by Tworzewski, 
Gaffney-Gassler, and Achilles-Manaresi.
More generally, we show that  $M_k^{\J}$
satisfy a certain generalization of the classical King formula.

\medskip 



\end{abstract}

\maketitle

\section{Introduction}

Let $X$ be a complex manifold of dimension $n$
and let $\J\to X$ be a coherent ideal sheaf with variety $Z$. 
Given a point $x\in X$,  
Tworzewski, \cite{T},
and Gaffney and Gassler, \cite{GG},  have independently introduced a list of
numbers, $e_0(\J,X,x),\ldots,e_n(\J,X,x)$, that we,
following \cite{GG}, call the {\it Segre numbers} at $x$. They are
a generalization of the classical local intersection number at $x$ in case
the ideal $\J_x$ is a complete intersection. 
The definition in both papers is based on a local variant of the
St\"uckrad-Vogel procedure, \cite{SV}.  In \cite{AM, AR} is given an algebraic
definition of these numbers generalizing the classical 
Hilbert-Samuel multiplicity of $\J$ at $x$.

In this paper we show that if $\J$ is generated by global bounded
functions there is a canonical global representation of the
Segre numbers of $\J$ as the Lelong numbers (of restrictions to $Z$) of Monge-Amp\`{e}re masses of the \emph{Green function $G=G_\J$ with poles along $\J$}. 
This function was introduced by Rashkovskii-Sigurdsson in
\cite[Definition~2.2]{RS} as a generalization of the classical Green
function $G_a$ with pole at a point $a\in X$. It is defined as the supremum
over the class $\F_\J$ of all negative psh (plurisubharmonic) functions $u$
on $X$ that locally satisfy $u\leq \log|f| + C$, where
$f=(f_1,\ldots, f_m)$ is a tuple of local generators of $\J$ and $C$
is a constant.

Note that even if $X$ is hyperconvex there might not exist non-trivial
functions in $\F_\J$. For example, if $X$ is the ball in $\C$, and
$\J$ is the radical ideal of functions vanishing at points
$a_1,a_2,\ldots\in X$, then there are negative
psh functions with poles at $a_j$ if and only if $a_j$
satisfy the the Blaschke condition.  
However, if $\J$ is globally generated by bounded functions $f_j$,
then $\log |f|+C$ is itself in $\F_\J$ for some constant $C$. 
Then locally $G$ is of the form 
\begin{equation}\label{formen}
G=\log|f|+h,
\end{equation} 
where
$h$ is locally bounded, see \cite[Theorem~2.8]{RS}. In particular, the unbounded locus of $G$ equals $Z$
and thus the Monge-Amp\`{e}re type products 
\begin{equation}\label{maprod}
(dd^c G)^k, \quad k\leq p :=\codim Z
\end{equation}
are well-defined, see, e.g., 
\cite[Theorem~III.4.5]{Dem}. Here and throughout 
$d^c=(i/2\pi)(\dbar -\partial)$.
By \emph{Demailly's comparison formula for Lelong numbers},  
\cite[Theorem~5.9]{Dem2}, 
\begin{equation}\label{demma}
\ell_x(dd^c G)^k = \ell_x(dd^c \log |f|)^k
\end{equation} 
for $x\in X$, where $\ell_x$ denotes the Lelong number at
$x$. 
Moreover, recall that \emph{King's formula}, \cite{King}, 
asserts that 
$(dd^c \log |f|)^p$ admits the Siu decomposition, \cite{Siu},
\begin{equation}\label{kingking}
(dd^c \log|f|)^p=\sum \beta_j [Z_j^p]+ R,
\end{equation} 
cf.\ \cite[Section~6]{Dem2}. 
Here $[Z_j^p]$ are the currents of integration along the irreducible
components $Z_j^p$ of codimension $p$ of $Z$, $\beta_j$ are the generic Hilbert-Samuel
multiplicities of $f$ along $Z_j^p$, see,
e.g. \cite[Chapter~4.3]{Fu}. 
In fact, the remainder term $R$ has integer Lelong numbers, see,
e.g. \cite[Theorem~1.1]{artikelaswy}, and therefore the set where $R$
has positive Lelong numbers is an analytic set of codimension $>p$. 
From \eqref{demma} and \eqref{kingking} one deduces that 
\begin{equation}\label{rssiu}
(dd^c G)^p=\sum \beta_j [Z_j^p]+R,
\end{equation} 
where $\beta_j$ and $Z_j^p$ are as above, and $R$ has the same Lelong
numbers as $R$ in \eqref{kingking}, cf.\ the proof of
Theorem~2.8 in \cite{RS}. In particular, if $Z$ is a point $a$, then 
$(dd^c G)^n=\sum \beta [a]+R$, where $[a]$ is the point evaluation at $a$
 and $\beta$ is the Hilbert-Samuel
  multiplicity of $\J$.
This generalizes the fact that $(dd^c G_a)^n=[a]$,
\cite[page~520]{Dem3}. 
The (Lelong numbers of the) Monge-Amp\`ere products \eqref{maprod} are related to the
integrability index of $G$ (and thus the 
log-canoncial threshold of $\J$), see, e.g., \cite{DP, Ras, Sk}; in
particular, Demailly-Pham \cite{DP} recently gave a sharp estimate of the
integrability index of $G$ in terms of the Lelong numbers of \eqref{maprod} for all $k\leq
p$. 

\medskip 
 Recall that \eqref{maprod}
can be defined inductively as 
\begin{equation}\label{forvirrad}
dd^c(G (dd^c G)^{k-1}).
\end{equation} 
In this paper we give meaning to $(dd^c G)^k$ for any $k$ if $G$ is
any psh function of the form \eqref{formen}: 
Inductively we show that 
\[
G\1_{X\setminus Z}(dd^c G)^{k-1}
\] 
has locally finite mass and define 
\[
(dd^c G)^k:=dd^c(G\1_{X\setminus Z}(dd^c G)^{k-1}),
\]  
see Proposition ~\ref{asgam}. 
When  $k\le p$ it follows from the dimension principle for closed positive currents, cf.\ Lemma~\ref{lma2} below,
that $\1_Z(dd^cG)^{k-1}=0$ 
and so our definition coincides with the classical one
for  $k\le p$. 
Our definition is modeled on the paper \cite{A2} by the first
author, in which currents $(dd^c\log |f|)^k$ are defined for all $k$
inductively as
above. In fact, $(dd^c\log |f|)^k$ can also be defined as a certain
limit of smooth forms coming from regularizations of $\log
|f|$:  
\begin{equation}\label{somna}
\lim_{\epsilon\to 0} (dd^c\log (|f|^2+\epsilon)^{1/2} )^k = (dd^c\log |f|)^k
\end{equation} 
for any $k$, 
see \cite[Proposition~4.4]{A2}. However, one cannot hope for such a suggestive definition of
$(dd^c G)^k$ in general, cf. Example ~\ref{exodus}. 
Also, our definition of $(dd^c G)^k$ does not coincide with the
\emph{non-pluripolar product} of $dd^cG$, as introduced in \cite{BT2,BEGZ}, since our $(dd^c G)^k$ charges pluripolar sets in
general, cf.\ the text after the proof of Proposition ~\ref{asgam}. 

Our main result is the following generalization of \eqref{rssiu}. 
Let $\pi^+\colon X^+\to X$ be the normalization of the blow-up of $X$
along $\J$ 
and let $W_j$ be the various irreducible components
of the exceptional divisor in $X^+$. Recall that the
\emph{(Fulton-MacPherson) distinguished varieties} of $\J$ are the
subvarieties $\pi^+(W_j)$ of  $X$, see, e.g., \cite[Chapter~10.5]{Lazar}. 
In particular, the distinguished varieties of codimension $p$ are precisely
the irreducible components of $Z$ of codimension $p$.

\begin{thm}\label{gking} 
Let $X$ be an $n$-dimensional complex manifold,
let  $\J$ be a coherent ideal sheaf on $X$ generated by global bounded
functions, and let $G$ be the Green function with poles along $\J$. 
Moreover, let $Z$ be the variety of $\J$ and $Z_j^k$ the Fulton-MacPherson distinguished varieties 
of $\J$ of codimension $k$.  
Then  
\begin{equation}\label{king}
M^\J_k:={\bf 1}_{Z} (dd^c G)^k=\sum_{j} \beta_j^k [Z_j^k]+N_k^\J =: S^\J_k+N^\J_k,  
\end{equation}
where the $\beta_j^k$ are positive integers and  the $N_k^\J$ are positive closed currents.
The numbers $n_k(\J,X,x):=\ell_x(N_k^\J)$
are nonnegative integers that only depend on the
integral closure class of $\J$ at $x$, and the set where 
$n_k(\J,X,x)\ge 1$ has codimension at least $k+1$.

The Lelong numbers at $x$ of $M^\J_k$ and  ${\bf 1}_{X\setminus
  Z}(dd^c G)^k$ are precisely the Segre number $e_k(\J,X,x)$ and the
polar multiplicity  $m_k(\J,X,x)$, respectively, of $\J_x$. 
\end{thm}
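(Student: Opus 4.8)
The plan is to prove Theorem~\ref{gking} in several stages, working from the known codimension-$\le p$ case outward by induction on $k$, and transferring the combinatorial content to the St\"uckrad--Vogel construction that defines the Segre numbers.

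First I would establish the structure of $M_k^\J$ as a positive closed current together with the Siu-type decomposition \eqref{king}. Since $G$ is locally of the form $\log|f|+h$ with $h$ bounded psh, the regularization result \eqref{somna} is not available for $G$, but I would instead run the inductive definition $(dd^cG)^k := dd^c\big(G\1_{X\setminus Z}(dd^cG)^{k-1}\big)$ from Proposition~\ref{asgam} and show by induction that $M_k^\J = \1_Z(dd^cG)^k$ is positive and closed with support in $Z$. Writing the Siu decomposition of $M_k^\J$, the codimension-$k$ components of $Z$ that can appear with positive coefficient must be distinguished varieties; to see this I would pull everything back by $\pi^+\colon X^+\to X$, where $G\circ\pi^+ = \log|\sigma| + (\text{bounded psh})$ for a section $\sigma$ cutting out the exceptional divisor, so that on $X^+$ the current $(dd^c(G\circ\pi^+))^k$ decomposes along the smooth divisors $W_j$; pushing forward and using that $\pi^+$ is a modification identifies the codimension-$k$ part with a sum $\sum_j\beta_j^k[Z_j^k]$ over distinguished $Z_j^k$ of codimension $k$, and that the $\beta_j^k$ are the expected positive integers. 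The integrality of $\beta_j^k$ and of the Lelong numbers of $N_k^\J$ I would get from the corresponding integrality statement for $(dd^c\log|f|)^k$ cited from \cite{artikelaswy}, combined with Demailly's comparison formula \eqref{demma} applied to the "variable pole" comparison of $G$ with $\log|f|$. That the bad set $\{n_k\ge 1\}$ has codimension $>k$ is then exactly the dimension principle applied to the analytic set carrying the positive Lelong numbers of $N_k^\J$.

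Next, the heart of the matter: identifying $\ell_x(M_k^\J)$ with the Segre number $e_k(\J,X,x)$. Here I would appeal to the defining St\"uckrad--Vogel / Tworzewski--Gaffney-Gassler procedure, which produces the Segre numbers as local intersection numbers of $Z$ with generic hyperplane sections built from generic $\C$-linear combinations of the generators $f_1,\dots,f_m$. The plan is to realize this procedure analytically: for generic $a=(a^{(1)},\dots,a^{(n)})\in(\C^m)^n$, the functions $g_i := \sum_\nu a^{(i)}_\nu f_\nu$ satisfy $\log|g_i| = \log|f| + u_i$ with $u_i$ locally bounded above, and by genericity the currents $dd^c\log|g_1|\wedge\cdots\wedge dd^c\log|g_k|$ have Lelong numbers at $x$ equal, on the nose, to the $k$-th St\"uckrad--Vogel contribution. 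On the other hand, by a Crofton / mean-value argument — integrating the comparison formula \eqref{demma} over $a$, exactly as in the standard proof that $(dd^c\log|f|)^k$ has the "right" Lelong numbers — one shows $\ell_x(dd^c G)^k = \ell_x(dd^c\log|f|)^k$ equals the average of the $\ell_x$ of these generic slice currents, which by genericity is the single generic value, namely $e_k(\J,X,x)$. Combined with $\1_Z(dd^cG)^k + \1_{X\setminus Z}(dd^cG)^k = (dd^cG)^k$ and the fact (dimension principle again) that $\1_{X\setminus Z}(dd^cG)^{k-1}$ contributes the "non-fixed-part" mass, I would then peel off the polar multiplicity $m_k(\J,X,x)$ as $\ell_x\big(\1_{X\setminus Z}(dd^cG)^k\big)$, matching it with the classical polar multiplicity via the same blow-up description of the St\"uckrad--Vogel remainder.

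The main obstacle I anticipate is the step where $k>p=\codim Z$, i.e., going beyond the range where classical Monge-Amp\`ere theory and King's formula apply verbatim. For $k\le p$ everything reduces to \eqref{demma}, \eqref{kingking} and known facts, but for $k>p$ the current $\1_{X\setminus Z}(dd^cG)^{k-1}$ genuinely charges $X\setminus Z$ near $Z$, and one must control its mass and its interaction with $G$ (which is $-\infty$ on $Z$) well enough that $dd^c\big(G\1_{X\setminus Z}(dd^cG)^{k-1}\big)$ both exists and has the claimed Lelong numbers. I expect to handle this exactly as in \cite{A2}: pull back to $X^+$, where the corresponding currents become products of $\log|\sigma|$ with a smooth factor along the normal crossing divisor $\sum W_j$, compute there using the explicit structure of such products, and push forward — the nontrivial point being that pushing forward commutes appropriately with the $\1_{X\setminus Z}$-truncation and with $dd^c$, which is where the argument from \cite{A2} combined with a careful analysis of how the St\"uckrad--Vogel algorithm lifts to $X^+$ will be needed.
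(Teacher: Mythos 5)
Your overall architecture --- define $(dd^cG)^k$ inductively, pull back to a log resolution where $\pi^*G=\log|f^0|+B$ with $B$ bounded psh, read off the Siu decomposition from the components of the exceptional divisor, and reduce the Lelong-number statements to the known result for $\log|f|$ from \cite{artikelaswy} --- is the paper's. But there is a genuine gap at the single decisive step: you transfer Lelong numbers from $(dd^c\log|f|)^k$ to $(dd^cG)^k$ by invoking Demailly's comparison formula \eqref{demma}, both for the integrality of $\ell_x(N_k^\J)$ and in your Crofton step (``integrating the comparison formula \eqref{demma} over $a$ \dots one shows $\ell_x(dd^cG)^k=\ell_x(dd^c\log|f|)^k$''). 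That formula is only available for $k\le p=\codim Z$, where the Monge--Amp\`ere products are the classical ones. For $k>p$ the currents are defined by the new inductive procedure, no off-the-shelf comparison theorem applies, and Example~\ref{exodus} shows the products are not even stable under decreasing regularizations, so one cannot expect such a comparison for free. The identity \eqref{jamfora} (Lemma~\ref{torsdag}) is precisely the new content of the theorem and needs its own proof; your ``handle it as in \cite{A2}'' remark addresses the existence of the products, not the comparison of $G$ with $\log|f|$.

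The mechanism that closes this gap in the paper is the following. On the resolution one has $\pi^*G=\log|f^0|+B$ and $\pi^*\log|f|=\log|f^0|+\log|f'|$ with $\omega_f=dd^c\log|f'|$ smooth, and the difference $A=B-\log|f'|$ is a \emph{globally defined bounded} upper semicontinuous function; hence
\begin{equation*}
(dd^cB)^{k-1}-\omega_f^{k-1}=dd^c\Big(A\sum_{\ell=0}^{k-2}(dd^cB)^\ell\wedge\omega_f^{k-2-\ell}\Big)
\end{equation*}
is exact with a potential that restricts to every relevant submanifold. After further blowing up so that $\pi^*\m_x$ is principal and intersecting with $(dd^c\log|\xi|)^{n-k}$, the difference of the localized currents $M^\xi_{n-k}\wedge T_k^\J-M^\xi_{n-k}\wedge T_k^f$ becomes $dW$ with $W$ supported at $x$, and Stokes' theorem shows its mass --- the difference of the Lelong numbers --- vanishes. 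Without this exactness argument (or a substitute for it) the identification of $\ell_x(M_k^\J)$ with $e_k(\J,X,x)$ and of $\ell_x(\1_{X\setminus Z}(dd^cG)^k)$ with $m_k(\J,X,x)$ for $k>p$ remains unproved. A smaller point: the bound $\codim\{n_k\ge1\}\ge k+1$ does not follow from the dimension principle alone; one must show that $N_k^\J$ puts no mass on any codimension-$k$ subvariety, which is the content of verifying that \eqref{skolgatan} really is the Siu decomposition.
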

For the notion of polar multiplicities see Section \ref{segretal}. Notice that  $M_k^\J=0$ if $k<\codim Z$ and that $N_{p}^\J=0$, cf., 
Lemma~\ref{lma2} below.
Also, notice that  \eqref{king} is the Siu decomposition, \cite{Siu},   of $M_k^\J$.


\begin{remark}\label{gulpenna} 
If $\J$ is generated by a global tuple $f$, then Theorem \ref{gking} holds with
$G$ replaced by 
any psh function of the form \eqref{formen}. 
\end{remark}

\smallskip
The analogous statement to Theorem \ref{gking} when  $G$ is replaced by 
$\log|f|$, where $f$ is a tuple of global generators, was proved by the authors and Samuelsson and Yger
in \cite[Theorem~1.1]{artikelaswy}. 
The case $k=p$ corresponds to the classical King formula, \eqref{kingking}. 
The main idea in the proof of Theorem ~\ref{gking} is to prove that for
any psh $G$ of the form ~\eqref{formen}, 
\begin{equation}\label{jamfora}
\ell_x(\1_Z(dd^c G)^k)=\ell_x(\1_Z(dd^c\log |f|)^k), ~
\ell_x(\1_{X\setminus Z} (dd^c G)^k)=\ell_x(\1_{X\setminus Z}(dd^c\log |f|)^k)
\end{equation}
for $x\in X$, see Lemma \ref{torsdag} below. 
Using this the theorem follows from the corresponding result in
\cite{artikelaswy}. 
In some sense, \eqref{jamfora} can be seen as a generalization of Demailly's
comparison formula, \eqref{demma}, to higher $k$, but for the very
special class of psh functions of the form \eqref{formen}.

In \cite{artikelaswy}, $X$ is allowed to be singular. Given that
there is a proper definition of $G$ when $X$ is singular so that
\eqref{formen} still holds,
the results in this paper will extend as well.

\smallskip
Theorem \ref{gking} gives us a canonical representation of the Segre
numbers of $\J$ in the case when $\J$ is generated by global bounded
functions. 
Let $X$ be a, say hyperconvex, domain in $\C^n$, and let $\J$ be a coherent ideal sheaf on $X$.
If we exhaust $X$ by reasonable relatively compact subsets $X_\ell$, for each $\ell$ we then
have currents $M_k^{\J_\ell}$, $\J_\ell=\J|_{X_\ell}$, whose Lelong numbers  at each point are the Segre numbers. If
for some reason these currents converge to  currents $M_k^{\J}$, we would have a canonical
representation of the Segre numbers of $\J$ on $X$, cf.\ Remark
\ref{forsvinn}.


\smallskip 

This paper is organized as follows. In Section ~\ref{segretal} we
recall the construction of Vogel cycles and Segre numbers. In Section
\ref{avfall} we show that the currents $(dd^c G)^k$ are well-defined
and discuss some properties. The proof of Theorem ~\ref{gking} occupies
Section \ref{bevisas}. In Sections \ref{prelim} and \ref{kjol} we give
some background on psh functions and positive currents
needed for the proofs.

\subsection*{Acknowledgment}

The work on this paper started when Pascal Thomas was visiting
G\"oteborg. We are grateful to him for interesting and
inspiring discussions on the subject. 
We would also like to thank Zbigniew B\l{}ocki and David Witt Nystr\"om 
for valuable discussions.

\section{Segre numbers}\label{segretal}

We will briefly recall the construction of Segre numbers from
\cite{T,GG}. Throughout we will assume that $X$ is a complex manifold
of dimension $n$ and that $\J$ is a coherent ideal sheaf on $X$ with
variety $Z$. 
Fix a point $x\in X$. 
A sequence $h=(h_1,h_2,\ldots, h_n)$ in the local ideal $\J_x$ is called a 
{\it Vogel sequence of $\J$ at $x$}
 if  there is 
a \nbh $\U\subset X$ of $x$ where the $h_j$ are defined, such that
\begin{equation}\label{vogelcondition}
\codim\big[(\U\setminus Z)\cap(|H_1|\cap\cdots\cap |H_{k}|)\big]=k \ {\rm or} \ \infty, \ k=1,\ldots,n;
\end{equation}
here  $|H_\ell|$ are the supports of the
divisors $H_\ell$ defined by $h_\ell$. 
Notice that if $f_1,\ldots, f_m$ generate $\J_x$, 
any generic sequence of $n$ linear combinations of the $f_j$ is a Vogel sequence 
at $x$.  
Set $X_0=X$, let $X_0^Z$ denote the irreducible components of $X_0$
that are contained in $Z$,  
and let $X_0^{X\setminus Z}$ be the  remaining components\footnote{Since we assume $X$ is smooth and 
connected, $X_0^{Z}$ is empty unless $\J=0$, in which case it equals ~$X$.}
so that 
$$X_0=X_0^Z+  X^{X\setminus Z}_0.$$
By the Vogel condition \eqref{vogelcondition},   $H_1$ intersects $X_0^{X\setminus Z}$ properly. Set  
$$
X_{1}=H_1\cdot X^{X\setminus Z}_0
$$
and decompose analogously $X_1$ into the components $X_1^Z$ contained in $Z$
and the remaining components $X_1^{X\setminus Z}$, so that 
$
X_{1}=X^Z_{1}+X_{1}^{X\setminus Z}. 
$
Define inductively $X_{k+1}=H_{k+1}\cdot X_k^{X\setminus Z}$, $X_{k+1}^Z$, and $X_{k+1}^{X\setminus Z}$. 
Then 
$$
V^h:=X^Z_{0}+ X^Z_{1}+\cdots + X^Z_{n}
$$
is the {\it Vogel cycle}\footnote{If $\J$ is the 
pullback to $X$ of the radical sheaf of an analytic set $A$,  
this is precisely Tworzewski's algorithm, \cite{T}. 
The notion Vogel 
cycle was introduced by Massey \cite{Mass1, Mass}. For a generic choice of Vogel sequence 
the associated Vogel cycle 
coincides with the {\it Segre cycle} introduced by Gaffney-Gassler, \cite{GG}, see Lemma 2.2 in \cite{GG}.} associated with the Vogel sequence $h$. 
Let $V^h_k$ denote the components of $V^h$ of codimension $k$, i.e.,  $V^h_k=X_k^Z$.   
The  irreducible components of
$V^h$  that appear in any Vogel cycle,   associated with a generic Vogel sequence    at $x$,  are called 
{\it fixed} components in \cite{GG}. 
The remaining  ones  are called {\it moving}. 
It turns out that the fixed Vogel components of $\J$ coincide with the distinguished 
varieties of $\J$, see, e.g.,  see \cite{GG} or \cite{artikelaswy}. 

It is proved  in \cite{GG} and in \cite{T} 
that the multiplicities  $e_k(\J,X,x):=\mult_x V_k^h$ and 
$m_k(\J,X,x):=\mult_x X_k^{X\setminus Z}$ are
independent of $h$ for a generic $h$, where however ``generic'' depends on $x$,
cf., Remark~\ref{alvar}; 
these numbers are called the 
{\it Segre numbers} and \emph{polar multiplicities}, respectively. 



\begin{remark}\label{alvar}
Recall that if $W$ 
is an analytic cycle in $X$, then the 
Lelong number at $x\in X$ of the current of integration $[W]$ along $W$ is
precisely the multiplicity $\mult_x W$ of $W$ at $x$. 

Assume that   $x$ is  a point for which $n_k(\J,X,x)\ge 1$  for some $k$,
where we use the notation from  Theorem \ref{gking}. Moreover, let
$V^h$ be a generic Vogel cycle such that $\mult_x V^h_k=e_k(x)$. Then 
$V^h_k=S^\J_k+W$, where we have identified $S^\J_k$ in Theorem
\ref{gking} with the corresponding cycle and $W$ is a positive cycle of codimension $k$, such that
$\mult_x W=n_k(\J,X,x)$. Since $n_k(\J,X,y)\ge 1$ only on a set
of codimension $\ge k+1$, at most points $y$ on $V^h_k$ we have that
$e_k(\J,X,y)=\mult_y(S^\J_k)$ and hence $\mult_y V^h_k > e_k(\J,X,y)$.
As soon as there is a  moving component at $x$ it is thus impossible to find  a Vogel cycle 
that realizes the Segre numbers in a whole \nbh of $x$.
\end{remark}


In \cite{artikelaswy} Theorem \ref{gking} with $G$ replaced by $\log
|f|$ was proved by showing that $M_k^f:=\1_Z(dd^c \log |f|)^k$ can be
seen as a certain average (of currents of integration) of Vogel
cycles. The fixed Vogel components then appear as the leading part
$S_k^\J$ in the Siu decomposition of $M_k^f$, 
whereas the remainder term $N_k^f$ is a mean value of the moving parts.

\section{Preliminaries}\label{prelim}

Let $\mu$ be a positive closed current on $X$. Recall that if $W$ is any subvariety,
then $\1_W \mu$ and  $\1_{X\setminus W}\mu$ are 
positive closed currents as well; this is the Skoda-El~Mir theorem,
see, e.g., \cite[Chapter~III.2.A]{Dem}.

\begin{lma}\label{lma2} 
Let  $\mu$ be  a positive closed current of bidegree $(p,p)$ that has
support on a subvariety of codimension $k$. If $k>p$ then $\mu=0$. If
$k=p$, then
$\mu=\alpha_1 [W_1]+\cdots+\alpha_\nu [W_\nu]$ where $W_j$ are the irreducible components
of $W$ and $\alpha_j\ge 0$.
\end{lma}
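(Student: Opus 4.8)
The plan is to handle the two assertions with the classical \emph{dimension principle} for closed positive currents together with a local structure computation on the smooth locus of $W$. Both statements are local, so I would first pass to a ball $X\subset\C^n$ with $W\subset X$ analytic. I would then invoke the dimension principle in the form: a closed positive current of bidegree $(p,p)$ whose support is contained in an analytic set of dimension $<n-p$ must vanish; see \cite[Ch.~III, Cor.~2.14]{Dem}. Since $\dim W=n-k<n-p$ when $k>p$, this immediately gives $\mu=0$ and disposes of the first assertion. For the case $k=p$ I would, as a preliminary reduction, let $W'$ be the union of the codimension-$p$ components of $W$ and $W''$ the union of the rest; by the Skoda--El~Mir theorem $\1_{X\setminus W'}\mu$ is closed positive and supported on $W''$, which has codimension $>p$, so it vanishes by the dimension principle and $\mu=\1_{W'}\mu$. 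Thus I may assume from the start that $W$ has pure codimension $p$.

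Next I would peel off the singular locus. Put $A=W_{\mathrm{sing}}\cup\bigcup_{i\ne j}(W_i\cap W_j)$, an analytic set of codimension $\ge p+1$. By Skoda--El~Mir again, $\1_A\mu$ is closed positive and supported on $A$, hence $\1_A\mu=0$ by the dimension principle, so $\mu=\1_{X\setminus A}\mu$ has support in the smooth locus $W\setminus A$, which is a disjoint union of connected complex submanifolds $W_j\setminus A$ of pure codimension $p$.

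Finally I would compute locally near a point of $W\setminus A$: choosing coordinates in which the relevant component is $N=\{z_1=\cdots=z_p=0\}$, positivity forces $\mu$ to have order $0$, so as an order-$0$ current supported on the submanifold $N$ it carries no transverse derivatives; combined with positivity this identifies $\mu$ there with $\beta\,[N]$, where $\beta\ge 0$ is a measure on $N$, and then $d\mu=0$ forces $\beta$ to be locally constant. Hence on each connected piece $W_j\setminus A$ we get $\mu=\alpha_j[W_j\setminus A]$ with $\alpha_j\ge 0$ constant, so $\mu=\sum_j\alpha_j[W_j\setminus A]$ on $X\setminus A$; since $\mu$ (by the previous paragraph) and each $[W_j]$ put no mass on $A$, taking trivial extensions yields $\mu=\sum_j\alpha_j[W_j]$ on all of $X$, with the understanding that the components of codimension $>p$ get coefficient $0$. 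The step I expect to be the real content is the local identification $\mu=\beta\,[N]$ on the smooth locus: one must use positivity to kill every component of the current except the one proportional to $(i\,dz_1\wedge d\bar z_1)\wedge\cdots\wedge(i\,dz_p\wedge d\bar z_p)$ with measure coefficient, and only then use closedness; alternatively this can be quoted directly from the Federer--Lelong support/structure theorem for currents carried by analytic sets, cf.\ \cite{Dem}.
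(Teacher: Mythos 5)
Your proof is correct and is essentially the argument the paper relies on: the paper offers no proof of Lemma~\ref{lma2} but simply cites \cite[Chapter~III.2.C]{Dem}, and what you have written is precisely the standard dimension-principle/support-theorem argument from that reference (reduce to pure codimension $p$ and to the regular locus by Skoda--El~Mir plus the dimension principle, then identify the current locally with a constant multiple of $[N]$).

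One small point of attribution in the final step: positivity alone does \emph{not} kill the form-components of $\mu$ other than the one proportional to $i\,dz_1\w d\bar z_1\w\cdots\w i\,dz_p\w d\bar z_p$; it only gives that $\mu$ has order $0$, i.e.\ measure coefficients carried by $N$. For instance, surface measure on $N=\{z_1=\cdots=z_p=0\}$ times $i\,dz_{p+1}\w d\bar z_{p+1}\w\cdots\w i\,dz_{2p}\w d\bar z_{2p}$ is a positive order-$0$ $(p,p)$-current supported on $N$ that is not of the form $\beta\,[N]$ (it is of course not closed). The off-diagonal and ``wrong-index'' components are killed by normality: from $z_j\mu=0$ (order $0$ plus support in $\{z_j=0\}$) and $d\mu=0$ one gets $dz_j\w\mu=d\bar z_j\w\mu=0$ for $j\le p$, which forces $\mu=\beta\, i\,dz_1\w d\bar z_1\w\cdots\w i\,dz_p\w d\bar z_p$ with $\beta$ a measure on $N$ that is $\ge0$ by positivity and locally constant by closedness. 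With that correction the argument is complete, and it is in any case exactly the content of the cited support theorem.
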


We refer to the first part of Lemma \ref{lma2} as the  \emph{dimension
  principle}. A proof can be found
in \cite[Chapter~III.2.C]{Dem}.

\smallskip 

If $b$ is psh and locally bounded and $T$ is any positive closed current, then
$T\w(dd^c b)^k$ is a well-defined positive current for any $k$, and
if $b_j$ is a decreasing 
sequence of bounded psh functions converging pointwise to $b$, then
\begin{equation}\label{apa0}
T\w (dd^c b)^k=\lim_j T\w (dd^c b_j)^k, \quad T\w b(dd^c b)^{k}=\lim_j T\w b_j (dd^c b_j)^{k},
\  k\le n.
\end{equation}
See, e.g., \cite[Theorem III.3.7]{Dem}. The  case $T\equiv 1$ was first proved by 
Bedford and Taylor, \cite{BT}.

\begin{prop}\label{apa1} 

Assume that $v,b$ are psh and that
$b$ is (locally) bounded. 

\noindent (i)
For $k\le n-1$, 
$$
v(dd^c b)^{k}
$$
has locally finite mass; more precisely, for any compact sets $L,K$, such that
$L\subset int(K)$, we have
\begin{equation}\label{apa2}
\|v(dd^c b)^{k}\|_L\le C_{K,L}\|v\|_K(\sup_K |b|)^k.
\end{equation}

\noindent (ii)
Moreover, if the unbounded locus of $v$ has Hausdorff dimension
$<2n-1$, then 
\begin{equation}\label{koko}
dd^c(v(dd^c b)^k)=dd^c v\w (dd^c b)^k.
\end{equation} 
If $v_j$ is a decreasing sequence of psh functions converging pointwise to
$v$, then 
\begin{equation}\label{nono}
v_j(dd^c  b)^k \to v(dd^c b)^k,
\end{equation}
and 
\begin{equation}\label{jojo}
dd^cv_j\w (dd^c b)^k\to dd^c v\w (dd^c b)^k
\end{equation}
in the current sense.
\end{prop}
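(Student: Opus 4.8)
\medskip

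The plan is to reduce everything to the local-boundedness of $v(dd^c b)^k$ near the unbounded locus of $v$, and then apply the Bedford--Taylor machinery \eqref{apa0} away from that locus together with a standard truncation argument.

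\emph{Part (i).} The statement is local, so fix $L\subset\operatorname{int}(K)$ and work in a coordinate ball. First I would treat the case where $v$ is bounded: then $v(dd^c b)^k$ is already a well-defined current and the estimate \eqref{apa2} is the classical Chern--Levine--Nirenberg inequality (see \cite[Chapter~III.3]{Dem}), applied with $T\equiv 1$; one may in fact write $v(dd^c b)^k = v(dd^c b)^{k}$ and integrate against a test form, bounding by $\|v\|_K$ times the mass of $(dd^c b)^k$ on a slightly larger set, which in turn is controlled by $(\sup|b|)^k$ via CLN. For general psh $v$ (not necessarily bounded), I would exhaust by $v_j:=\max(v,-j)$, which is a decreasing sequence of psh functions, each locally bounded, converging pointwise to $v$. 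Applying the bounded case gives the uniform estimate $\|v_j(dd^c b)^k\|_L\le C_{K,L}\|v_j\|_K(\sup_K|b|)^k\le C_{K,L}\|v\|_K(\sup_K|b|)^k$, using $|v_j|\le|v|$ pointwise and hence $\|v_j\|_K\le\|v\|_K$ once we know $v$ is locally integrable (which it is, being psh). A weak-$*$ limit argument then shows $v(dd^c b)^k$ has locally finite mass with the same bound; one must check that $v_j(dd^c b)^k$ actually converges to $v(dd^c b)^k$ in the appropriate sense, but since $v_j\downarrow v$ in $L^1_{\mathrm{loc}}$ and $(dd^c b)^k$ is a fixed current with locally bounded-density-free coefficients (i.e. a positive current), $v_j(dd^c b)^k\to v(dd^c b)^k$ as currents by dominated convergence applied coefficientwise.

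\emph{Part (ii).} Here the hypothesis is that the unbounded locus $E$ of $v$ has Hausdorff dimension $<2n-1$. The convergences \eqref{nono} and \eqref{jojo} for a decreasing sequence $v_j\downarrow v$: away from $E$, where $v$ is locally bounded, \eqref{nono} and \eqref{jojo} follow directly from \eqref{apa0} (with $T=(dd^c b)^k$ and with $T\equiv 1$ combined with the product rule on the bounded set). To upgrade this across $E$, I would use that $E$, having Hausdorff dimension $<2n-1$, carries no mass for any positive current of bidimension $(1,1)$ — in particular the masses $\|v_j(dd^c b)^k\|$ and $\|dd^c v_j\wedge(dd^c b)^k\|$ do not concentrate on $E$ — so the limits computed on $X\setminus E$ extend to all of $X$; combined with the uniform mass bounds from part (i) this pins down the limit currents uniquely. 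Finally, for \eqref{koko}: away from $E$ we have $dd^c(v(dd^c b)^k)=dd^c v\wedge(dd^c b)^k$ by the bounded-$v$ theory. Both sides are currents of order $0$ (the left because $dd^c$ of a current of locally finite mass whose boundary we are claiming is again of finite mass; the right by part (i) applied with $v$ replaced by a local potential, or directly since it is closed positive), and they agree on $X\setminus E$; since $E$ has Hausdorff dimension $<2n-1$, it cannot support the difference, which is a current of bidegree $(k+1,k+1)$, i.e. of bidimension $(n-k-1,n-k-1)$ — and a closed current supported on a set that is too small (dimension $<2n-1 \le 2(n-k-1)+1$ fails in general, so one argues instead via support-in-pluripolar-set together with the finite-mass bound). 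Concretely: $T:=dd^c(v(dd^c b)^k)-dd^c v\wedge(dd^c b)^k$ is a current of order zero supported on $E$; by the support theorem, if $E$ has Hausdorff $2n-1$ measure zero then any order-zero current of degree $\ge 1$ supported on $E$ vanishes, because it would have to have a component of "codimension-$0$-type" along $E$ which is excluded by the dimension bound. This gives \eqref{koko}.

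\emph{Main obstacle.} The genuinely delicate point is the extension-across-$E$ step in part (ii): one must justify that the equality $dd^c(v(dd^c b)^k)=dd^c v\wedge(dd^c b)^k$, valid on $X\setminus E$, propagates across the polar set $E$. The clean way is to invoke a support/dimension theorem for currents of order zero supported on sets of small Hausdorff dimension (see \cite[Chapter~III.2]{Dem}), having first checked that $dd^c v\wedge(dd^c b)^k$ is a well-defined positive closed current — which itself requires the dimension bound on $E$ to make sense of the wedge product in the first place. All the rest is Chern--Levine--Nirenberg estimates plus the Bedford--Taylor monotone-convergence theorem \eqref{apa0}, which are quoted from \cite{Dem}.
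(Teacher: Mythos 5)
Your part (i) contains a real gap in the bounded-to-unbounded reduction. The mechanism you describe for bounded $v$ --- ``integrate against a test form, bounding by $\|v\|_K$ times the mass of $(dd^c b)^k$'' --- only produces the bound $\sup_K|v|\cdot\|(dd^cb)^k\|_K$, whereas \eqref{apa2} requires the $L^1$-norm $\|v\|_{L^1(K)}$ on the right. With the sup-norm version your truncation $v_j=\max(v,-j)$ gives $\|v_j(dd^cb)^k\|_L\le C\, j\,(\sup_K|b|)^k$, which blows up, so the limit argument does not close. Obtaining the $L^1$ bound even for bounded $v$ is the actual content of Demailly's Proposition III.3.11 (an integration-by-parts argument trading the measure $(dd^cb)^k$ against the sub-mean-value property of $v$); the paper proves (i) simply by citing that proposition, and if you intend to do the same you should not substitute the incorrect sketch for it.

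In part (ii) the argument that is supposed to carry the equality \eqref{koko} across the unbounded locus $E$ is not correct. Your key claim --- that an order-zero current of degree $\ge 1$ supported on a set with $\mathcal H_{2n-1}$-measure zero must vanish --- is false: the Dirac mass at a point is an order-zero current of bidegree $(n,n)$ supported on a set of Hausdorff dimension $0$. The support theorem for a current of bidimension $(n-k-1,n-k-1)$ needs $\mathcal H_{2(n-k-1)}(E)=0$, which does not follow from $\mathcal H_{2n-1}(E)=0$ when $k\ge1$; likewise ``$E$ carries no mass for any positive current'' is false --- concentration of mass of closed positive currents on small sets is exactly what Lelong numbers measure, and preventing it here is the whole difficulty. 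Your assertion that $dd^c(v(dd^cb)^k)$ has order zero is also circular, since that is part of what \eqref{koko} asserts. The correct (and much shorter) route, which is the one the paper takes, is: \eqref{nono} and \eqref{jojo} are precisely Demailly's Proposition III.4.9 applied with $u_1=v$ and all other factors equal to $b$ (this is where the Hausdorff hypothesis enters, through a quantitative Chern--Levine--Nirenberg estimate on small neighborhoods of $E$, not through a support theorem); then one chooses the $v_j$ smooth, for which $dd^c\bigl(v_j(dd^cb)^k\bigr)=dd^cv_j\w(dd^cb)^k$ is elementary, and \eqref{koko} follows by letting $j\to\infty$ using \eqref{nono}, \eqref{jojo} and the weak continuity of $dd^c$. (The paper also records that, by an argument of B\l{}ocki, the Hausdorff-dimension hypothesis can be dispensed with entirely.)
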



The first part of Proposition \ref{apa1} follows immediately from
Proposition~3.11 in \cite[Chapter~III]{Dem}. 
Moreover, Proposition~4.9 in
loc.\ cit.\ applied to $u_1=v$ and $u_j=b$ implies \eqref{nono} and
\eqref{jojo}. 
If we choose $v_j$ smooth, then  
\[
dd^c(v_j(dd^c b)^k)=dd^c v_j \w (dd^c b)^k. 
\]
Thus \eqref{koko} follows from \eqref{nono} and \eqref{jojo}. 
In fact, the assumption about the Hausdorff dimension is not
necessary; 
an elegant and quite direct  argument 
has been communicated to us by Z.\ B\l{}ocki, \cite{Blocki}.

\begin{cor}\label{pluto}
If $b$ is psh and (locally) bounded on $X$ and $W$ is an analytic variety
of positive codimension, then for each $k\geq 0$, 
\begin{equation}\label{apa3}
\1_W(dd^c b)^k=0.
\end{equation}
\end{cor}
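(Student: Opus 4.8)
The plan is to deduce the vanishing from the fact that, because $b$ is locally bounded, the positive closed current $(dd^c b)^k$ assigns no mass to $W$. Since $\1_W(dd^c b)^k$ is again positive and closed by the Skoda-El~Mir theorem, this is exactly the statement $\1_W(dd^c b)^k=0$. First I would split according to the codimension $c=\codim W$. If $c>k$, then $\1_W(dd^c b)^k$ is a positive closed current of bidegree $(k,k)$ supported on a variety of codimension $>k$, so it vanishes by the dimension principle (the first part of Lemma~\ref{lma2}), and no use of boundedness is made. The real content lies in the range $c\le k$, where the dimension principle is silent and the hypothesis that $b$ is bounded becomes essential: a positive closed $(k,k)$-current may well be carried by a variety of codimension $\le k$, and it is precisely boundedness of the potential that prevents $(dd^c b)^k$ from doing so.

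For the remaining cases I would argue by induction on $k$. When $k=0$ one has $(dd^c b)^0=1$, and $\1_W$ represents the zero current because $W$, being of positive codimension, is a Lebesgue null set. For the inductive step I write $(dd^c b)^k=dd^c b\w(dd^c b)^{k-1}$, which is legitimate by Proposition~\ref{apa1}(ii) applied with $v=b$ (the Hausdorff-dimension hypothesis there being unnecessary in view of the remark following the proposition). By the induction hypothesis $(dd^c b)^{k-1}$ puts no mass on $W$, i.e.\ $(dd^c b)^{k-1}=\1_{X\setminus W}(dd^c b)^{k-1}$. It then remains to see that wedging a closed positive current that does not charge $W$ with $dd^c b$, for bounded $b$, produces no mass on $W$. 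This is the Bedford-Taylor principle that the Monge-Amp\`ere operator of a locally bounded psh function does not charge pluripolar sets, and $W$ is pluripolar since locally $W\subseteq\{\log|h|=-\infty\}$ for a tuple $h$ of local generators of its ideal.

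The heart of the matter, and the step I expect to be the main obstacle, is this last no-charge property. Concretely I would establish it through the Chern-Levine-Nirenberg mass estimate in its capacitary form: after normalizing $0\le b\le 1$ locally, the mass of $dd^c b\w(dd^c b)^{k-1}$ carried by a Borel set $E$ is dominated by a constant times the relative Monge-Amp\`ere capacity of $E$, and pluripolar sets have vanishing relative capacity. Feeding $E=W$ into this estimate, together with the inductive fact that $(dd^c b)^{k-1}$ is already concentrated off $W$, yields $\1_W(dd^c b)^k=0$ and closes the induction. The bookkeeping with the dimension principle and Proposition~\ref{apa1} is routine; the only genuinely analytic input is the capacity estimate controlling the bounded Monge-Amp\`ere mass on the thin set $W$.
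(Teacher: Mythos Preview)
Your argument is correct but follows a different route than the paper. You invoke the general Bedford--Taylor fact that Monge--Amp\`ere measures of locally bounded psh functions put no mass on pluripolar sets, via the relative capacity estimate; this settles the corollary at once (the codimension split and the induction on $k$ are in fact unnecessary overhead, since the capacity bound already handles all $k$ simultaneously). The paper instead gives a short self-contained argument using only the Chern--Levine--Nirenberg estimate already recorded in Proposition~\ref{apa1}: after reducing by stratification to a smooth hypersurface $W=\{w=0\}$ in local coordinates, one writes $\1_W(dd^c b)^k$ as the value at $\lambda=0$ of $-(|w|^{2\lambda}-1)(dd^c b)^k$; since $|w|^{2\lambda}-1$ is psh and its $L^1$-mass tends to $0$ as $\lambda\to 0$, the estimate \eqref{apa2} forces the limit to vanish. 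Your approach buys generality (it works verbatim for any pluripolar $W$) at the cost of importing the capacity machinery from outside; the paper's approach stays entirely internal to the tools already set up, at the cost of being tailored to analytic varieties.
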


\begin{proof} It is enough to consider the case when $W$ is a smooth hypersurface.
The general case follows by stratification. Since it is a local statement,
we may choose coordinates $z=(z',w)$ so that $W=\{w=0\}$. Notice that in a set
$|w|\le r, |z'|\le r'$, we have that
$\1_W(dd^c b)^k$ is the value at $\lambda=0$ of
$$
-(|w|^{2\lambda}-1)(dd^c b)^k.
$$
Since $|w|^{2\lambda}-1$ is psh, \eqref{apa3} follows from  \eqref{apa2}
since the total mass of 
$|w|^{2\lambda}-1$ tends to $0$ when $\lambda\to 0$.
\end{proof}

\begin{lma}\label{apelsin}
If $b$ is psh and (locally) bounded on $X$ and $i\colon Y\to X$ is a
smooth submanifold, then for $k\le n$, 
\begin{equation}\label{apa4}
[Y]\w(dd^c b)^k=i_*(dd^c i^* b)^k,\quad [Y]\w b (dd^c b)^{k}=i_*\big
(i^*b(dd^c i^* b)^{k}\big).
\end{equation}
\end{lma}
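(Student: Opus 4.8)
The plan is to reduce \eqref{apa4} to the elementary case of a smooth $b$ by regularization, invoking the Bedford--Taylor continuity recorded in \eqref{apa0}. Both identities are local, so I would fix a point $y\in Y$ and choose coordinates $z=(z',z'')$ on a \nbh $\U$ of $y$ with $Y\cap\U=\{z'=0\}$, so that the holomorphic inclusion $i$ becomes $z''\mapsto(0,z'')$ and $i^*$ is just restriction. Since $b$ is locally bounded, so is $i^*b$, so $(dd^c i^*b)^k$ and $i^*b\,(dd^c i^*b)^k$ are well defined on $Y$ by the Bedford--Taylor theory; likewise $[Y]\w(dd^c b)^k$ and $[Y]\w b(dd^c b)^k$ are well defined since $[Y]$ is positive and closed, cf.\ Proposition~\ref{apa1}(i). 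If $\dim Y<k$ all four currents vanish for bidegree reasons, so one may assume $k\le\dim Y$.

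When $b$ is smooth, $(dd^c b)^k$ is a smooth form, and for any test form $\phi$ one has
\[
\la [Y]\w(dd^c b)^k,\phi\ra=\int_Y i^*\!\big((dd^c b)^k\w\phi\big)=\int_Y (dd^c i^*b)^k\w i^*\phi=\la i_*\big((dd^c i^*b)^k\big),\phi\ra,
\]
because $i^*$ commutes with $dd^c$ and with wedge products; inserting a factor $b$ (resp.\ $i^*b$) yields the second identity of \eqref{apa4} in the smooth case.

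For general locally bounded psh $b$, shrink $\U$ and choose (by mollification) smooth psh functions $b_j$ decreasing pointwise to $b$ on $\U$. By \eqref{apa0} applied with the positive closed current $T=[Y]$ we get $[Y]\w(dd^c b_j)^k\to[Y]\w(dd^c b)^k$ and $[Y]\w b_j(dd^c b_j)^k\to[Y]\w b(dd^c b)^k$. On $Y$, the smooth psh functions $i^*b_j$ decrease pointwise to the locally bounded $i^*b$, so \eqref{apa0} with $T\equiv1$ on $Y$ gives $(dd^c i^*b_j)^k\to(dd^c i^*b)^k$ and $i^*b_j(dd^c i^*b_j)^k\to i^*b(dd^c i^*b)^k$; applying $i_*$ (continuous on currents, $i$ being a closed embedding) the right-hand sides of the smooth identities converge to the right-hand sides of \eqref{apa4}. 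Comparing the two limits proves the lemma. The only delicate point — the nearest thing to an obstacle — is that the \emph{same} approximating sequence $b_j$ must serve on $X$ and on $Y$, and that $i^*b\not\equiv-\infty$ so that $(dd^c i^*b)^k$ is genuinely the Bedford--Taylor limit of the $(dd^c i^*b_j)^k$; both are guaranteed by the local boundedness of $b$.
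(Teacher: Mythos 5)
Your proposal is correct and follows essentially the same route as the paper: verify \eqref{apa4} directly for smooth $b$, then pass to the general case via a decreasing smooth psh approximation and the Bedford--Taylor convergence \eqref{apa0} applied with $T=[Y]$ on $X$ and $T\equiv 1$ on $Y$. The extra details you supply (local coordinates, continuity of $i_*$, local boundedness of $i^*b$) are points the paper leaves implicit.
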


\begin{proof}
First assume that $b$ is smooth. Then
$$
\int_X [Y]\w (dd^c b)^k\w\xi=\int_Y (dd^ci^* b)^k\w i^*\xi=
\int_X i_*\big((dd^ci^* b)^k\big)\w\xi
$$
and similarly 
$$
\int_X [Y]\w b(dd^c b)^k\w\xi=
\int_X i_*\big(i^*b(dd^ci^* b)^k\big)\w\xi,
$$
so that \eqref{apa4} holds in this case. Now let $b$ be
bounded and psh and let $b_j$ be a decreasing sequence
of smooth psh functions converging pointwise to $b$. Now
\eqref{apa4} follows from the smooth case and \eqref{apa0}.
\end{proof}

\section{Higher Monge-Amp\`{e}re products}\label{avfall}

Let $G$ be a psh function of the form \eqref{formen}. 
We will give meaning to 
\begin{equation}\label{tejon}
(dd^c G)^k
\end{equation} by inductively defining it as 
 $(dd^c G)^0=1$ and 
\begin{equation}\label{offer}
(dd^c G)^k:=dd^c\big( G \1_{X\setminus Z}(dd^cG)^{k-1}\big), \quad k\geq 1.
\end{equation}
Proposition \ref{asgam} below asserts that this definition makes sense
and that $(dd^c G)^k$ are positive and closed. 
As pointed out in the introduction this definition coincides with the iterative definition \eqref{forvirrad} for $k\leq p$. 


\begin{prop}\label{asgam}
Let $X$ be a complex manifold of dimension $n$, let $f$ be a tuple
of global functions of $X$, let $G$ be a psh function
of the form \eqref{formen}, and 
 let $G_j$ be a decreasing sequence of 
smooth psh functions in $X$  converging pointwise to $G$.
Assume that \eqref{tejon} is inductively defined via \eqref{offer} for
a fixed $k$. Then
$$
G\1_{X\setminus Z}(dd^c G)^{k}:=\lim_j G_j\1_{X\setminus Z}(dd^c G)^{k}
$$
has locally finite mass and does not depend on the choice of sequence
$G_j$. 
Moreover $(dd^c G)^{k+1}=dd^c(G\1_{X\setminus Z}(dd^c G)^{k})$ is
positive and closed. 
\end{prop}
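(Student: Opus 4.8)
I would argue by induction on $k$, establishing simultaneously that $(dd^cG)^k$ is a well-defined positive closed current, that $G\1_{X\setminus Z}(dd^cG)^k$ has locally finite mass and is independent of the regularizing sequence, and hence that $(dd^cG)^{k+1}=dd^c\big(G\1_{X\setminus Z}(dd^cG)^k\big)$ is positive and closed. The case $k=0$ is immediate: $Z$ is Lebesgue-null, so $G\1_{X\setminus Z}=G\in L^1_{\loc}$, and $G_j\to G$ in $L^1_{\loc}$ by monotone convergence, whence $(dd^cG)^1=dd^cG$ is positive and closed. For the inductive step everything is local, and on $X\setminus Z$ the function $G=\log|f|+h$ is locally bounded, so there $(dd^cG)^k$ is the classical Bedford--Taylor product and all assertions follow from Proposition \ref{apa1}. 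So we fix $x_0\in Z$ and a small ball $B\ni x_0$. Adding a constant to $G$ changes neither the recursion \eqref{offer} --- since $dd^c$ annihilates constants and $\1_{X\setminus Z}(dd^cG)^j$ is closed, so $dd^c\big(\1_{X\setminus Z}(dd^cG)^j\big)=0$ --- nor, up to a locally finite term, the currents $G\1_{X\setminus Z}(dd^cG)^j$; hence we may assume $G<0$ and $G_j<0$ on $B$.

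First I would settle the case where, in suitable local coordinates on $B$, $G=\sum_i a_i\log|z_i|+b$ with integers $a_i\ge 0$ and $b$ bounded and psh on $B$ (this is the situation one meets on a log-resolution; that $b$ is psh, rather than merely a difference of psh functions, follows because each $\log|z_i|$ is pluriharmonic off $\{z_i=0\}$, so $b$ is psh off the normal-crossings set $Z=\bigcup_{a_i>0}\{z_i=0\}$, bounded, hence psh across $Z$). Here a straightforward induction, using Corollary \ref{pluto} to discard the terms supported on $Z$, gives $\1_{X\setminus Z}(dd^cG)^k=(dd^cb)^k$ and
\[
(dd^cG)^{k+1}=\textstyle\sum_i a_i\,[\{z_i=0\}]\w(dd^cb)^k+(dd^cb)^{k+1},
\]
the wedge products being defined by Lemma \ref{apelsin} and \eqref{koko}; the local finiteness of $G\1_{X\setminus Z}(dd^cG)^k=\sum_i a_i\log|z_i|(dd^cb)^k+b(dd^cb)^k$ is then precisely the Chern--Levine--Nirenberg estimate \eqref{apa2} together with $\log|z_i|\in L^1_{\loc}$, while \eqref{nono} gives both the convergence from the regularizations and the independence of the sequence.

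The general case is reduced to this by a local log-resolution $\pi\colon\widetilde B\to B$ of $\J$, for which $\pi^{-1}(Z)$ has simple normal crossings and $\pi^*G=\sum_i a_i\log|z_i|+\widetilde b$ in suitable charts, with $\widetilde b$ bounded and psh. By the same induction together with the projection formula --- legitimate because $\pi$ is a proper modification, so $\pi_*\pi^*u=u$ for $u\in L^1_{\loc}$ and $\pi_*$ of a locally finite current is locally finite --- one obtains $(dd^cG)^k=\pi_*\big((dd^c\pi^*G)^k\big)$ and $G\1_{X\setminus Z}(dd^cG)^k=\pi_*\big(\pi^*G\,\1_{\widetilde B\setminus\pi^{-1}(Z)}(dd^c\pi^*G)^k\big)$; the current inside the last pushforward is locally finite by the monomial case, hence so is $G\1_{X\setminus Z}(dd^cG)^k$. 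I expect this reduction --- checking that the recursive construction is functorial under $\pi$ --- to be the only genuine difficulty: the point is that $\1_{X\setminus Z}(dd^cG)^k$ is not an arbitrary positive measure avoiding $Z$ (against such a measure $\log|f|$ need not be integrable), and it is exactly this functoriality that encodes the extra information. When $h\equiv 0$ this is the argument of \cite{A2}, adapted here to accommodate the bounded term.

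Granting local finiteness, the conclusion is formal. For any smooth $G_j\downarrow G$ the functions $-G_j$ increase pointwise to $-G$, which is integrable against the trace of $\1_{X\setminus Z}(dd^cG)^k$, so $G_j\1_{X\setminus Z}(dd^cG)^k\to G\1_{X\setminus Z}(dd^cG)^k$ in $L^1_{\loc}$ by monotone convergence; in particular the limit is independent of $(G_j)$, and we set $(dd^cG)^{k+1}:=dd^c\big(G\1_{X\setminus Z}(dd^cG)^k\big)=\lim_j dd^c\big(G_j\1_{X\setminus Z}(dd^cG)^k\big)$. For each $j$ the function $G_j$ is smooth, so $dd^c\big(G_j\1_{X\setminus Z}(dd^cG)^k\big)=dd^cG_j\w\1_{X\setminus Z}(dd^cG)^k$ is the wedge of a positive $(1,1)$-form with a positive closed current, hence positive and closed; letting $j\to\infty$ shows $(dd^cG)^{k+1}$ is positive, and it is closed because $dd^c$ of any current is closed. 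This completes the induction.
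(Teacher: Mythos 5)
Your argument is correct and is essentially the paper's own proof: both pass to a modification on which the pulled-back ideal is principal with normal crossings, write $\pi^*G=\log|f^0|+B$ with $B$ bounded and psh, establish $(dd^cG)^{\ell}=\pi_*\big([D]\wedge(dd^cB)^{\ell-1}+(dd^cB)^{\ell}\big)$ by induction using Proposition~\ref{apa1}, Corollary~\ref{pluto} and Lemma~\ref{apelsin}, and then push forward. The only difference is organizational: you isolate the normal-crossings (monomial) case and then invoke functoriality of the recursion under $\pi_*$ --- which you rightly flag as the crux, and which the paper handles via \eqref{orm2}--\eqref{orm4} --- whereas the paper carries out the induction directly on the modification.
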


The proof below relies heavily on the fact that
$G$ is of the form \eqref{formen}. It could be interesting to
investigate whether Proposition \ref{asgam} holds for a wider class of
psh functions $G$ with unbounded locus $Z$.


\begin{proof}
Let $\pi\colon \widetilde X\to X$ be a smooth modification such that
$\pi^*\J$ is principal and its divisor
is of the form
\begin{equation}\label{deco}
D=\sum \alpha_j D_j,
\end{equation}
where $D_j$ are smooth hypersurfaces with normal crossings.
In  particular, then $\pi^* f=f^0 f'$, where
$f^0$ is a section of the line bundle $L_D$ that defines $D$ and
$f'$ is a non-vanishing tuple of sections of $L_D^{-1}$.

Locally on $\widetilde X$ we can choose a frame for $L_D$ and
in this frame we have, cf. \eqref{formen}, 
\begin{equation}\label{julbestyr}
\pi^* G=\log|f^0|+\log|f'|+\pi^* h=:\log|f^0|+b.
\end{equation} 
Since $\log|f^0|$ is pluriharmonic outside $$|D|:=\cup_j D_j$$ it follows that 
$$
b=\log|f'|+\pi^* h
$$
is psh there; 
furthermore it is locally bounded at $|D|$. By a standard argument
$b$ has a unique (bounded) psh extension $B$ across $|D|$. 
Notice that $dd^c B$ is a global positive closed
current on $\widetilde X$ and
\begin{equation*}
dd^c\pi^* G=[D]+dd^c B.
\end{equation*}


\smallskip
Let $G_j$ be a decreasing sequence of smooth psh functions converging pointwise
to  $G$.  Since 
$$
dd^c G_j=\pi_*(dd^c\pi^* G_j)\to \pi_*\big(dd^c \pi^* G\big)
=\pi_*\big([D]+dd^c B\big)
$$
it follows that
\begin{equation*}
dd^c G=\pi_*\big([D]+dd^c B\big).
\end{equation*}

Let us now assume that we have proved Proposition~\ref{asgam} as well as
the equality 
\begin{equation}\label{orm1}
\big(dd^c G\big)^{\ell}=\pi_*\big([D]\w(dd^c B)^{\ell-1}+(dd^c B)^\ell\big)
\end{equation}
for $\ell\leq k$. We are to see that then: 
\smallskip

\noindent (i) \emph{$G\1_{X\setminus Z}(dd^cG)^{k}:=\lim_j G_j\1_{X\setminus Z}(dd^cG)^{k}$ has locally finite
mass.}

\smallskip

\noindent (ii) \emph{If 
\begin{equation*}
(dd^cG)^{k+1}:=dd^c\big(G\1_{X\setminus Z}(dd^c G)^{k}\big),
\end{equation*}
then \eqref{orm1} holds for $\ell=k+1$.}

\smallskip
\noindent As soon as (i) and (ii) are verified, Proposition~\ref{asgam} follows.
\smallskip

Notice that if $\mu$ is a closed positive current, then
\begin{equation}\label{orm2}
\1_Z\pi_*\mu=\pi_*(\1_{|D|}\mu).
\end{equation}
In view of Corollary~\ref{pluto} we have that
\begin{equation}\label{orm3}
\1_{|D|}(dd^c B)^{k}=0.
\end{equation}
From the induction hypothesis \eqref{orm1}, \eqref{orm2} and \eqref{orm3} 
we  get
\begin{equation}\label{orm4}
\1_{X\setminus Z}\big(dd^c G\big)^{k}=
\pi_* (dd^c B)^{k}.
\end{equation}
By Proposition~\ref{apa1}, $(\pi^*G)(dd^c B)^{k}$ has locally finite mass, and
$$
(\pi^* G_j)(dd^c B)^{k}\to (\pi^*G)(dd^c B)^{k}
$$
if $G_j$ is any decreasing sequence of psh functions that tends to $G$.
If $G_j$ are smooth we have by \eqref{orm4} that
$$
G_j \1_{X\setminus  Z}(dd^c G)^{k}=\pi_*\big((\pi^* G_j)(dd^c B)^{k}\big),
$$
which tends to 
\begin{equation}\label{mackmyra}
G\1_{X\setminus Z}(dd^c G)^{k}=\pi_*\big( (\pi^* G)(dd^c B)^{k}\big),
\end{equation}
which has locally finite mass. 
Thus (i) is verified.

We now consider (ii).  We claim that
\begin{equation}\label{gras}
dd^c \big(\pi^*G \w (dd^c B)^{k}\big)=[D]\w (dd^c B)^{k}+(dd^c B)^{k+1}.
\end{equation}
Recall that locally $\pi^*G= v +B$, where $v=\log|f^0|$ and $B$ is psh and bounded.
Take smooth psh $v_j$ that decrease to $v$.
Then $v_j+B$ are psh and decrease to $v+B$ and thus, by Proposition~\ref{apa1},
$$
v_j(dd^c B)^{k}+ B(dd^c B)^{k}=
(v_j+B) (dd^c B)^{k}\to (v+B) (dd^c B)^{k}.
$$
It follows that 
\begin{equation*}
(v+B)(dd^c B)^{k}=v(dd^c B)^{k}+B(dd^c B)^{k}.
\end{equation*}
From  Proposition~\ref{apa1} we get that
$$
dd^c\big(v(dd^cB)^{k}\big)=[D]\w(dd^c B)^{k},
$$
which proves the claim. 
In view of \eqref{mackmyra} and \eqref{gras}  the statement (ii) now follows.
\end{proof}

For future reference we notice  that
\begin{equation}\label{strut}
M^\J_k=\pi_*\big([D]\w(dd^c B)^{k-1}\big), \quad \1_{X\setminus
  Z}(dd^c G)^k=\pi_* (dd^c B)^k.
\end{equation}
In fact $\1_{X\setminus Z}(dd^c G)^k$ equals the \emph{non-pluripolar
  product} $\langle dd^c G\rangle^k$ as defined in \cite{BT2, BEGZ}.

\smallskip 

It follows from the proof above and Proposition~\ref{apa1} that if
 $G_j$ is any decreasing sequence of psh functions converging pointwise to $G$, 
then $G_j\1_{X\setminus Z}(dd^c G)^{k-1}\to G\1_{X\setminus Z}(dd^c G)^{k-1}$
and 
$$
dd^c (G_j\w \1_{X\setminus Z}(dd^cG)^{k-1})=
dd^c G_j\w \1_{X\setminus Z}(dd^cG)^{k-1}\to (dd^c G)^k.
$$

Recall that if $G_j$ are psh functions that decrease to
$G$, then 
\begin{equation*}
\lim_j (dd^c G_j)^k = (dd^c G)^k, \quad k\leq p,
\end{equation*}
see, e.g., \cite[Proposition ~III.4.9]{Dem}. 
However, for $k>p$ one cannot hope for a definition of $(dd^c G)^k$
that is robust in this sense. 
%
In fact, even if $G_j$ and $\widetilde G_j$ are
sequences of smooth psh functions decreasing to $G$ and $(dd^c G_j)^k$
and $(dd^c \widetilde G_j)^k$ converge to
positive closed currents $T$ and $\widetilde T$, respectively, $T$
might be different from $\widetilde T$, as is illustrated by the
following example. 

\begin{ex}\label{exodus} 
Let $\varphi=(w,zw)$. 
Then 
\[
dd^c\log |\varphi|=dd^c\log |w|+dd^c\log (1+|z|^2)^{1/2}=
[w=0]+dd^c \alpha, 
\]
where $[w=0]$ denotes the current of integration along $\{w=0\}$ and  $\alpha=\log (1+|z|^2)^{1/2}$.  
Thus by \eqref{offer}, 
\[
(dd^c \log |\varphi|)^2=[w=0]\wedge dd^c \alpha. 
\]
Let 
$G_\epsilon=\log (|\varphi|^2 +\epsilon)^{1/2}$ and 
$\widetilde G_\epsilon=\log (|w|^2 +\epsilon)^{1/2} +\alpha$. 
Then $G_\epsilon$ and $\widetilde G_\epsilon$ are smooth psh functions
that decrease
towards $\log |\varphi|$ as $\epsilon$ tends to $0$. 
On the one hand, by \eqref{somna},  
\[
\lim_{\epsilon\to 0} (dd^c G_\epsilon)^2=(dd^c\log |\varphi|)^2.
\]
On the other hand, again using \eqref{somna}, but now for 
$(dd^c \log |w|)^2$, 
\begin{equation*}
(dd^c \widetilde G_\epsilon)^2=
(dd^c \log (|w|^2 +\epsilon)^{1/2})^2 +2 dd^c \log(|w|^2 +\epsilon)^{1/2}\w dd^c
  \alpha  
\longrightarrow 
2[w=0]\wedge dd^c \alpha. 
\end{equation*}
\end{ex}

\begin{remark}\label{forsvinn} 
Assume that $X_\ell$ is an exhaustion of $X$ by relatively compact
subsets such that the restriction $\J_\ell$ of $\J$ to $X_\ell$ is
generated by global bounded functions. It would be interesting
to know whether, or under what assumptions, the currents $M_k^{\J_\ell}$
then converge. 
Convergence would give us a global canonical representation of the Segre numbers
of $\J$. 


Assume that $\J$ is indeed generated by global bounded functions  
and let $G_\ell$ denote the Green function with poles along $\J_\ell$. 
 Then, arguing as in the proof of Proposition \ref{asgam} and using
the notation from that proof, 
\[
\pi^*G_\ell=\log |f^0|+ B_\ell,
\]
where $B_\ell$ is psh and bounded, and moreover
\[
(dd^c G_\ell)^k=\pi_*([D]\wedge (dd^c B_\ell)^{k-1} + (dd^c B_\ell)^k). 
\]
Assume that $G_\ell$
decrease towards $G$. Then $B_\ell$ decrease towards $B$, as defined in
\eqref{julbestyr}, and thus $\lim_\ell(dd^c
G_\ell)^k=(dd^c G)^k$ in light of \eqref{apa0} and \eqref{orm1}. 
\end{remark}


\section{Lelong numbers}\label{kjol}

Let $T$ be  a positive closed $(k,k)$-current. 
If $k=n$, following
\cite[Section~5]{artikelaswy}, we let 
$$
M^\xi_0\w T:=\1_{\{x\}} T.
$$
Otherwise  
\begin{equation*}
M^\xi_{n-k} \w T:=\1_{\{x\}}\big((dd^c\log|\xi|)^{n-k}\w T\big);
\end{equation*}
here we inductively define
\begin{multline*}
(dd^c\log|\xi|)^{\ell}\w T:=\\
dd^c\big(\log|\xi|\w(dd^c\log|\xi|)^{\ell-1}\w T\big)=
\lim_jdd^c\big(v_j\w(dd^c\log|\xi|)^{\ell-1}\w T\big),
\end{multline*}
where $v_j$ is a decreasing sequence of smooth psh functions converging
pointwise to $\log|\xi|$. Because of the dimension principle it is not
necessary to insert $\1_{X\setminus\{x\}}$ in this definition, cf., 
Section~\ref{avfall}. See Remark~\ref{brud} below for another possible
definition of $M^\xi_{n-k} \w T$. 
Clearly $M^\xi_{n-k}\w T$  is an  $(n,n)$-current with 
support at $x$, and it is in fact equal to
$\alpha [x]$, where $\alpha$ is the Lelong number of $T$ at $x$, see, e.g,
\cite[Lemma~2.1]{artikelaswy}.

\begin{remark}\label{brud}
As is pointed out in \cite[Section~5]{artikelaswy} one can
define 
$
M^{\xi}\w T
$
as the value at $\lambda=0$ of the current-valued analytic function
$$
\lambda\mapsto
\frac{\dbar|\xi|^{2\lambda}\w\partial|\xi|^2}{2\pi i|\xi|^2}\w(dd^c\log|\xi|)^{n-k-1}\w T.
$$
\end{remark}

\section{Proof of Theorem~\ref{gking}}\label{bevisas} 
We will prove the slightly more general formulation of Theorem
\ref{gking} stated in Remark \ref{gulpenna}, i.e., we let $G$ be
any psh function of the form \eqref{formen}.

We still assume that $\pi\colon  \widetilde X\to X$ is a smooth modification
and use  the notation from the proof of 
Proposition~\ref{asgam}.
Notice that $L_D$ has a Hermitian  metric such that 
$|f^0|_{L_D}=|\pi^*f|$. 
By the Poincar\'e-Lelong formula, 
\begin{equation}\label{torsk1}
dd^c\log|\pi^* f|=[D]+\omega_f,
\end{equation}
where $\omega_f$ is the first Chern form for $L^{-1}_D$. 

Let us  fix a local holomorphic frame so that $\log|f'|$ is a 
well-defined function as above.
Since 
\[
\log |\pi^* f|=\log |f^0|+\log |f'|,
\]
from \eqref{torsk1} we have that 
\begin{equation}\label{vinterlov}
\omega_f=dd^c\log|f'|.
\end{equation}
Let $b$ be the psh bounded function outside $|D|$ defined in \eqref{julbestyr}. 
If we choose another local frame for $L_D$, then $\log|f'|$ is changed
to $\log |f'| + \alpha$ where $\alpha$ is pluriharmonic, and $b$ is
thus 
changed to $\tilde
b:= b+\alpha$. Moreover $\widetilde B:=B+\alpha$ is the unique psh
extension of $\tilde b$ across $|D|$, cf.\ the proof of Proposition \ref{asgam}. It follows that $A$, locally
defined as
\begin{equation}\label{elefant}
A:=B-\log |f'|,
\end{equation} 
is a global upper semicontinuous extension of $\pi^*h$ across $|D|$. 
Notice also that $A(dd^c B)^\ell$ 
is well-defined on $\widetilde X$
and, in light of \eqref{vinterlov} and \eqref{elefant}, that
\begin{equation*}
(dd^c B)^{k-1}-\omega_f^{k-1}=dd^c\Big( A\sum_{\ell=0}^{k-2} (dd^c B)^\ell\w \omega_f^{k-2-\ell}\Big).
\end{equation*}
Assume now that $Y\subset \widetilde X$ is a smooth submanifold and
that $i\colon Y\to \widetilde X$ is the natural inclusion. Then $i^*B$ is psh and 
bounded, $i^*\log|f'|$ is smooth, and,  in the same way as above,
 $i^*A$ is a global upper semi-continuous function on $Y$ and
 \begin{equation}\label{lussekatt2}
(dd^c i^*B)^{k-1}-i^*\omega_f^{k-1}=dd^c\Big( i^*A\sum_{\ell=0}^{k-2} (dd^c i^*B)^\ell\w i^*\omega_f^{k-2-\ell}\Big).
\end{equation}
In view of Lemma~\ref{apelsin}, \eqref{lussekatt2} implies that 
\begin{equation*}
[Y]\w\Big ((dd^c B)^{k-1}-\omega_f^{k-1}\Big )=dd^ci_*\Big( i^*A\sum_{\ell=0}^{k-2} (dd^c i^*B)^\ell\w i^*\omega_f^{k-2-\ell}\Big).
\end{equation*}


\smallskip

The currents $(dd^c \log |f|)^k$ and $M^f_k$ are defined in a completely
analogous way as $(dd^c G)^k$ and 
$M^\J_k$,  just replacing $G$ by $\log|f|$, cf., the introduction and
the end of Section \ref{segretal} and also \cite{artikelaswy}.
Arguing as in the proof of Proposition~\ref{asgam}, we get, cf., \eqref{strut},   that
\begin{equation*}
M^f_k=\pi_*([D]\w\omega_f^{k-1}), \quad 
\1_{X\setminus Z}(dd^c\log |f|)^k=\pi_*\omega_f^{k}
\end{equation*}

\begin{lma}\label{torsdag}
The currents $M^\J_k$ and $M^f_k$ have the same
Lelong number at each point $x\in X$. 
Moreover, the currents $\1_{X\setminus Z} (dd^c G)^k$ and
$\1_{X\setminus Z} (dd^c \log |f|)^k$ have the same Lelong number at
each point $x\in X$. 
\end{lma}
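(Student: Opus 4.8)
The plan is to compare the two currents $M^\J_k$ and $M^f_k$ (and similarly $\1_{X\setminus Z}(dd^c G)^k$ and $\1_{X\setminus Z}(dd^c\log|f|)^k$) after pulling everything up to the smooth modification $\widetilde X$, where, by \eqref{strut} and the displayed formulas just before the lemma,
\[
M^\J_k=\pi_*\big([D]\w (dd^c B)^{k-1}\big), \quad M^f_k=\pi_*\big([D]\w\omega_f^{k-1}\big),
\]
and analogously $\1_{X\setminus Z}(dd^c G)^k=\pi_*(dd^c B)^k$, $\1_{X\setminus Z}(dd^c\log|f|)^k=\pi_*\omega_f^{k}$. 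Since Lelong numbers at a point $x\in X$ can be computed by the operators $M^\xi$ of Section~\ref{kjol} applied to these pushforwards, and since $M^\xi\w\pi_*(\cdot)$ relates to a pushforward of a quantity on $\widetilde X$, the point is to show that $[D]\w(dd^c B)^{k-1}$ and $[D]\w\omega_f^{k-1}$, respectively $(dd^c B)^k$ and $\omega_f^k$, differ by $dd^c$ of a current whose contribution to the relevant Lelong number vanishes.

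First I would record the key algebraic identity already set up in the excerpt: from \eqref{vinterlov}, \eqref{elefant}, and the telescoping computation there,
\[
(dd^c B)^{k-1}-\omega_f^{k-1}=dd^c\Big(A\sum_{\ell=0}^{k-2}(dd^c B)^\ell\w\omega_f^{k-2-\ell}\Big),
\]
and after wedging with $[D]$ (a smooth submanifold locally, handled by stratification as in Corollary~\ref{pluto}) and applying Lemma~\ref{apelsin} one gets
\[
[D]\w\big((dd^c B)^{k-1}-\omega_f^{k-1}\big)=dd^c\Big(i_*\big(i^*A\textstyle\sum_{\ell}(dd^c i^*B)^\ell\w i^*\omega_f^{k-2-\ell}\big)\Big),
\]
where $i\colon |D|\to\widetilde X$. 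The crucial structural fact is that $A$ (an extension of $\pi^*h$) is \emph{bounded} near any fiber $\pi^{-1}(x)$, since $h$ is locally bounded; hence the current $Q_k:=i_*\big(i^*A\sum_\ell(dd^c i^*B)^\ell\w i^*\omega_f^{k-2-\ell}\big)$, and likewise the analogous current $\widetilde Q_k$ obtained from $(dd^c B)^k-\omega_f^k=dd^c(A\sum_\ell(dd^c B)^\ell\w\omega_f^{k-1-\ell})$ (wedged with nothing, just on $\widetilde X$), has a locally bounded potential-type factor. Then $M^\J_k-M^f_k=\pi_*(dd^c Q_k)=dd^c\pi_*Q_k$, and one wants $\ell_x(dd^c\pi_*Q_k)=0$.

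The main obstacle — and where I would spend the effort — is to argue that applying $dd^c$ to a current of the form $\pi_*(\text{(bounded function)}\cdot(\text{product of smooth forms and bounded-potential masses}))$ cannot create a positive Lelong number at $x$; concretely, one must show $\ell_x$ of $M^\xi_{n-k}\w dd^c\pi_*Q_k$ vanishes. I would do this by pulling $\log|\xi|\circ\pi$ up to $\widetilde X$ and using that $M^\xi$ (in the form of Remark~\ref{brud}, the analytic continuation of $\bar\partial|\xi\circ\pi|^{2\lambda}\w\partial|\xi\circ\pi|^2/(2\pi i|\xi\circ\pi|^2)\w\cdots$) applied to $dd^c$ of a current with a \emph{locally bounded} factor $A$ gives $0$ at $\lambda=0$: integrating by parts moves $dd^c$ onto the regularizing factor, and one invokes estimate \eqref{apa2} together with the fact that the mass of $|\xi\circ\pi|^{2\lambda}-1$ (or of $\log|\xi\circ\pi|$ cut off near the fiber) tends to $0$, exactly as in Corollary~\ref{pluto}. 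An alternative, probably cleaner, route is to approximate $A$ from above by smooth functions $A_j\searrow A$, obtaining $\pi_*(dd^c Q_k^{(j)})$ with $Q_k^{(j)}$ having smooth potential so that $dd^c\pi_*Q_k^{(j)}=\pi_*(dd^c Q_k^{(j)})$ is represented by pushforwards of smooth-potential currents; pass to the limit using \eqref{apa0} and \eqref{nono}--\eqref{jojo} to conclude that $M^\J_k-M^f_k$, $\1_{X\setminus Z}(dd^c G)^k-\1_{X\setminus Z}(dd^c\log|f|)^k$ are $dd^c$-exact via bounded potentials, and finally quote \cite[Lemma~2.1]{artikelaswy} (or the semicontinuity/comparison principle of Demailly) to see that such a difference carries Lelong number $0$ everywhere. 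With that in hand both assertions of Lemma~\ref{torsdag} follow.
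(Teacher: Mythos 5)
Your setup---the representation \eqref{strut}, the telescoping identity \eqref{lussekatt2}, and the restriction to the smooth components of $D$ via Lemma~\ref{apelsin}---matches what the paper does before and at the start of its proof, and it correctly reduces the lemma to showing that the $dd^c$-exact correction term does not change the Lelong number at $x$. But that last step, which you yourself flag as the main obstacle, is where essentially all of the content of the lemma sits, and neither of your two proposed resolutions closes it. The boundedness of $A$ is not the operative fact: boundedness of a potential does not by itself imply that two positive closed $(k,k)$-currents differing by a $dd^c$-exact term have equal Lelong numbers, and none of the results you cite asserts this. Lemma~2.1 of \cite{artikelaswy} only identifies $M^\xi_{n-k}\w T$ with $\ell_x(T)[x]$ for a single positive closed $T$; Demailly's comparison theorem concerns Monge--Amp\`ere powers of psh functions with equivalent singularities, not exact differences; and the Corollary~\ref{pluto}-type estimate controls $\1_W$ of a positive current built from bounded potentials, not the Lelong number of $dd^c$ of a pushforward. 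Moreover, $M^\xi_{n-k}\w(\cdot)$ is only defined on positive closed currents, so ``$M^\xi_{n-k}\w dd^c\pi_*Q_k$'' has to be interpreted as a difference of two separately defined products, and the integration by parts you invoke to move $dd^c$ onto the regularizing kernel is precisely the delicate manipulation that needs justification.

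What the paper actually does at this point is the missing idea. It enlarges the modification so that $\pi^*\m_x$ is \emph{also} principal, with exceptional divisor $E$ meeting each $D_j$ in normal crossings, so that $\log|\pi^*\xi|=\log|\xi^0|+\log|\xi'|$ with $\log|\xi'|$ smooth. This makes the iterated product $(dd^c\log|\xi|)^{n-k}\w T^\J_k$ explicitly computable upstairs: at each intermediate step the $[E^D]$-contribution is a positive closed current of too low bidegree supported at $x$ and is killed by the dimension principle, so only the final factor of $[E^D]$ survives. Then \eqref{lussekatt2}, applied on $E^D\subset\pi^{-1}(x)$, exhibits $M^\xi_{n-k}\w T^\J_k-M^\xi_{n-k}\w T^f_k$ as $dW$ with $W$ supported at the single point $x$, and Stokes' theorem gives $\int dW=0$, i.e., equality of the Lelong numbers (with separate, easy cases when $k=n$ or $D\subset|E|$). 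It is this localization of the potential to $\pi^{-1}(x)$ together with Stokes---not the boundedness of $A$---that makes the argument work; and without the extra principalization of $\m_x$ the products $(dd^c\log|\pi^*\xi|)^{n-k}\w(dd^c i^*B)^{k-1}$ on $D_j$ are not even classically defined, since the unbounded locus of $\log|\pi^*\xi|$ restricted to $D_j$ can have codimension one there.
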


\begin{proof}
Let us fix a point $x\in X$ and let $\xi$ be a tuple of functions that defines
the maximal ideal $\m_x$ at $x$.
We can choose the modification $\pi\colon\widetilde X\to X$
so that also $\pi^* \m_x$ is principal, i.e.,
$\pi^*\xi=\xi^0\xi'$, where $\xi^0$ is a section of a line bundle $L_E$ that
defines the exceptional divisor $E$,
and $\xi'$ is a non-vanishing tuple of sections of $L_E^{-1}$. Let us assume that 
\begin{equation}\label{rosa} 
E=\sum_\kappa \beta_\kappa E_\kappa,
\end{equation}
where $E_\kappa$ are  irreducible with simple normal crossings and $\beta_\kappa$
are integers. 
We may also assume that, for each $j$, cf., \eqref{deco},
either $D_j\subset |E|$ or all
$E_\kappa$ intersect $D_j$  properly and that  
\[E^{D_j}_\kappa:=E_\kappa\cap D_j\] are  smooth.
Let $\omega_\xi$ be the first Chern form
of $L_E^{-1}$ with respect to the metric induced by $\xi$,
so that
$$
\omega_\xi=dd^c\log|\xi'|,
$$
cf., \eqref{vinterlov},  
and
$$
dd^c\log|\pi^*\xi|=[E]+\omega_\xi.
$$

\smallskip

Let $i_j\colon D_j\to \widetilde X$ be the injection of  $D_j$ as a submanifold of
$\widetilde X$. It follows from \eqref{deco}, 
\eqref{strut} and Lemma~\ref{apelsin}  that
\begin{equation}\label{robot}
M^\J_k=\sum_j\alpha_j \pi_*(i_j)_*\big((dd^c (i_j)^*B)^{k-1}\big).
\end{equation} 
In order to prove the first part of the lemma, it is enough to consider one single term
in \eqref{robot} and verify that
$$
T^\J_k:=\pi_*i_*\big((dd^c i^*B)^{k-1}\big)
$$
and
$$
T^f_k:=\pi_*i_*\big(i^*\omega_f^{k-1}\big)
$$
have the same Lelong numbers, where we write $D=D_j$ and $i=i_j$ for simplicity.

\smallskip
Let us first assume that $k=n$. If $D\subset|E|$, then $T_n^\J$ and $T_n^f$ both have
support at $x$. 
In view of \eqref{lussekatt2},  with $Y=D$,  we  have that 
$$
T^\J_k-T^f_k=
dd^c \pi_*i_* \Big( i^*A\sum_{\ell=1}^{k-2} (dd^c i^*B)^\ell\w i^*\omega_f^{k-2-\ell}\Big)=:dW,
$$
where $W$ has support at $x$.
By Stokes' theorem thus 
$$
\int (T_n^\J-T_n^f)=\int dW=0,
$$ 
which means that $T_n^\J$ and $T_n^f$ have the same Lelong number at $x$. If
$D$ is not contained in $E$, then 
$i^{-1}E$ has positive codimension in $D$ and therefore,
$$
\1_{\{x\}}T_n^\J=\pi_*i_*(\1_{|i^{-1}E|}(dd^c i^* B)^{n-1})=0
$$
by Corollary~\ref{pluto}. In the same way we see that $\1_{\{x\}}T_n^f=0$.

\smallskip
Let us now assume that $k<n$. If $D\subset|E|$, then $T^\J_k$ and
$T^f_k$ are positive closed $(k,k)$-currents with support at $x$, so by the dimension principle they both vanish.
We can therefore assume that 
$i^*\pi^*\xi$ does not vanish identically on $D$; by assumption 
it then defines a smooth divisor $E^D$ on $D$.
Locally  on $D$,
$$
\log|i^*\pi^*\xi|=\log|i^*\xi^0|+\log|i^*\xi'|,
$$
and thus 
\begin{equation}\label{utrymme}
dd^c\log|i^*\pi^*\xi|= [E^D]+i^*\omega_\xi,
\end{equation}
where $[E^D]$  is the  Lelong current on $D$ associated to $E^D$.
If $v_j$ are as in Section~\ref{kjol}, then 
\begin{equation*}
dd^ci^*\pi^* v_j\to [E^D]+i^*\omega_\xi.
\end{equation*}
Now
$$
dd^c(v_j T^\J_k)=\pi_*i_*\big(dd^c i^*\pi^* v_j\w (dd^c i^* B)^{k-1}\big)
$$
so that
$$
dd^c\log|\xi|\w T^\J_k=\pi_*i_*\big(([E^D]+i^*\omega_\xi)\w(dd^c i^* B)^{k-1}\big)
$$
by Proposition ~\ref{apa1} and \eqref{utrymme}. 
Moreover, since $\pi_*i_* \big([E^D]\w(dd^c i^*
B)^{k-1}\big )$ has support at $x$, by the dimension principle,  
$$
dd^c\log|\xi|\w T^\J_k=\pi_*i_*\big(i^*\omega_\xi\w(dd^c i^* B)^{k-1}\big).
$$
By induction we get
$$
(dd^c\log|\xi|)^{n-k}\w T^\J_k=
\pi_*i_*\big(([E^D]+i^*\omega_\xi)\w i^*\omega_\xi^{n-k-1}\w(dd^c i^* B)^{k-1}\big).
$$
Therefore, by Corollary ~\ref{pluto}, 
$$
M^\xi_{n-k}\w T^\J_k=
\1_{\{x\}}(dd^c\log|\xi|)^{n-k}\w T^\J_k=
\pi_*i_*\big([E^D]\w i^*\omega_\xi^{n-k-1}\w(dd^c i^* B)^{k-1}\big).
$$
Let $\iota_\kappa\colon E_\kappa^D\to  D$ be the natural injection.
By \eqref{rosa} and Lemma~\ref{apelsin} we have that 
$$
M^{\xi}_{n-k}\w T^\J_k=\sum_\kappa \beta_\kappa
\pi_*i_*(\iota_\kappa)_*\big(
(\iota_\kappa)^*i^*\omega_\xi^{n-k-1}\w (dd^c(\iota_\kappa)^*i^* B)^{k-1}\big).
$$
By analogous arguments, 
$$
M^{\xi}_{n-k}\w T^f_k=\sum_\kappa \beta_\kappa
\pi_*i_*(\iota_\kappa)_*\big(
(\iota_\kappa)^*i^*\omega_\xi^{n-k-1}\w (\iota_\kappa)^*i^* \omega_f^{k-1}\big).
$$
For simplicity in notation let us assume that $E^D$ has just one
irreducible component and let $\iota:E^D\to D$ be the natural
injection. 
By \eqref{lussekatt2} applied to $E^D$ we have that 
\begin{multline*}
M^{\xi}_{n-k} \w T^\J_k-M^{\xi}_{n-k} \w T^f_k=\\
dd^c\pi_*i_*\iota_* \Big(\iota^* i^*A ~\iota^*i^*\omega_\xi^{n-k-1}\w
 \sum_{\ell =0}^{k-2}(dd^c\iota^*i^* B)^{\ell}
\iota^*i^* \omega_f^{k-1-\ell}\Big)= :dW,
\end{multline*}
where $W$ has support at $x$.
It follows by Stokes' theorem that the integral of this current is zero,
and thus the Lelong numbers at $x$ of $T_k^\J$ and $T_k^f$ coincide.
Thus the first part of the lemma is proved.

By analogous arguments we get that 
$\pi_*(dd^cB)^k$ and $\pi_*(\omega_f)^k$ have the same Lelong
number at $x$, which
proves the second part of the lemma, cf.\ \eqref{strut} and \eqref{rosa}. 



\end{proof}

We can now conclude  the proof of Theorem~\ref{gking}.

\begin{proof}[Proof of Theorem~\ref{gking}]

Let $D^\ell_j$ be the irreducible components of $D$ such that
$\pi(D^\ell_j)$ have codimension $\ell$.  Then 
$$
M^\J_k=\pi_*\big([D]\w(dd^c B)^{k-1}\big)=
\pi_*\big(\sum_{\ell\le k}\sum_j ([D^\ell_j]\w
(dd^c B)^{k-1}\big)
$$
since terms with $\ell>k$ vanish because of the dimension principle.
We claim that
\begin{equation}\label{skolgatan}
M^\J_k=
\pi_*\big(\sum_j ([D^k_j]\w
(dd^c B)^{k-1}\big)+
\pi_*\big(\sum_{\ell<k}\sum_j ([D^\ell_j]\w
(dd^c B)^{k-1}\big)=: S_k^\J+N^\J_k
\end{equation}
is the Siu decomposition of $M^\J_k$. 
First notice that since
$$
\pi_*\big([D^k_j]\w(dd^c B)^{k-1}\big)
$$
is a $(k,k)$-current with support on the set $Z:=\pi(D^k_j)$ of codimension
$k$ it must be of the form $\alpha [Z]$ where $\alpha$ is a constant,
see Lemma~\ref{lma2}.

It is now enough to see that 
if $W$ is a subvariety of codimension $k$, then
$\1_WN^\J_k=0$, i.e., 
\begin{equation*}
\1_W \pi_*\big([D^\ell_j]\w(dd^c B)^{k-1}\big)=0
\end{equation*}
if $\ell<k$. Let $i\colon D^\ell_j\to\widetilde X$ be the natural injection.
By Lemma ~\ref{apelsin} we have
$$
\1_W \pi_*\big([D^\ell_j]\w(dd^c B)^{k-1}\big)=
\1_W(\pi_*i_*\big(dd^c i^*B)^{k-1}\big)=
\pi_*i_*\big(\1_{(\pi\circ i)^{-1}(W)}(dd^c i^* B)^{k-1}\big).
$$
Notice  that since $\pi (D^\ell_j)$ is irreducible and
not contained in $W$ it follows that
$\pi^{-1}(W)\cap D^\ell_j$ has  positive
codimension in $D^\ell_j$, and hence
$\1_{(\pi\circ i)^{-1}(W)}(dd^c i^* B)^{k-1}=0$
in view of Corollary~\ref{pluto}.

Thus \eqref{skolgatan}  is the Siu decomposition. 
Since $M^\J_k$ and $M^f_k$ have the same
Lelong number at each point by Lemma ~\ref{torsdag} and the set where
$N_k^\J$ and $N_k^f$ have positive Lelong number have codimension $>k$
we conclude that $S^\J_k=S^f_k$, see Remark ~\ref{alvar}. 
Since also $\1_{X\setminus Z} (dd^cG)^k$ and 
$\1_{X\setminus Z} (dd^c\log |f|)^k$ have the same Lelong numbers at
$x$ by Lemma \ref{torsdag}, Theorem~\ref{gking} follows from the 
analogous result, Theorem~1.1, for $M^f$ in \cite{artikelaswy}.

\end{proof}

\def\listing#1#2#3#4#5#6{ {\sc #1}  {\it #2} #3
{\bf#4} {#5} #6}


\begin{thebibliography}{999999}



\bibitem{AM}
\listing{Achilles, R. \& Manaresi, M.}
{Multiplicities of a bigraded ring and intersection theory}
{Math.\ Ann.}{309}{(1997)}{573--591}   

\bibitem{AR}
\listing{Achilles, R.\ \& Rams, S.}
{Intersection numbers, Segre numbers and generalized Samuel multiplicities}
{Arch.\ Math.\ (Basel)}  {77}  {(2001)} {391--398}



\bibitem{A2}\listing{Andersson, M.}
{Residue currents of holomorphic sections and Lelong currents}
{Arkiv f\"or matematik}{43} {(2005)}{201--219}



\bibitem{artikelaswy}
\listing{Andersson, M.\ \& Samuelsson Kalm, H.\ \& Wulcan, E.\ \& Yger, A.}
{Segre numbers, a generalized King formula, and local intersections}
{arXiv:1009.2458v3}{}{}{}





\bibitem{BT}
\listing{Bedford, E.\ \& Taylor, A.} 
{A new capacity for plurisubharmonic functions.}
{Acta Math.}{149}{(1982)}{no.\ 1--2, 1--40}

\bibitem{BT2}
\listing{Bedford, E.\ \& Taylor, A.} 
{Fine topology, \v{S}ilov boundary, and $(dd^c)^n$.} 
{J.\ Funct.\ Anal.}{72}{(1987)}{no.\ 2, 225--251.}


\bibitem{BEGZ}\listing{Boucksom, S.\ \& Eyssidieux, P.\ \& Guedj
  V.\ \& Zeriahi, A.}
{Monge-Amp\`ere equations in big cohomology classes.}
{Acta Math.}{205}{(2010)}{199--262}

\bibitem{Blocki}\listing{B\l{}ocki, Z.}{Personal communication, 2012}{}{}{}{}




 
\bibitem{Dem}\listing{Demailly, J.-P.}{Complex and Differential
  geometry}{available at http://www-fourier.ujf-grenoble.fr/~demailly/manuscripts/agbook.pdf}{}{}{}

\bibitem{Dem2}\listing{Demailly, J.-P.}{Monge-Amp\`{e}re Operators,
    Lelong Numbers, and Intersection Theory}{Complex analysis and geometry, 115–193, Univ.\ Ser.\ Math., Plenum, New York, 1993}{}{}{}

\bibitem{Dem3}\listing{Demailly, J.-P.}{Mesures de Monge-Amp\`ere et mesures pluriharmoniques}{Math.\ Z.}{194}{(1987)}{no.\ 4, 519--564}

\bibitem{DP}\listing{Demailly, J.-P., Pham H.\ H.}
{A sharp lower bound for the log canonical threshold}
{Acta Math.}{212}{(2014)}{1--9}



\bibitem{Fu}\listing{Fulton, W.}{Intersection theory, second edition}{}{}{1998}{Springer-Verlag, Berlin-Heidelberg}{}  

\bibitem{GG}\listing{Gaffney, T.\ \& Gassler, R.}
{Segre numbers and hypersurface singularities} 
{J.\ Algebraic Geom.}  {8} {(1999)}{695--736}










\bibitem{King}\listing{King, J.\ R.}{A residue formula for complex
subvarieties}
{Proc.\ Carolina conf.\ on holomoprhic mappings and minimal surfaces,
Univ.\ of North Carolina, Chapel Hill}{} {(1970)}{43--56} 



\bibitem{Lazar}\listing{Lazarsfeld, R.}
{Positivity in Algebraic Geometry II.\   
 Positivity for vector bundles, and multiplier ideals.} {Ergebnisse der Mathematik und ihrer Grenzgebiete.\ 3.\ Folge.\ A Series of Modern Surveys in Mathematics} {49} {Springer-Verlag, Berlin} {2004} 



\bibitem{Mass1}\listing{Massey, D.} 
{L\^e cycles and hypersurface singularities.} {Lecture Notes in Mathematics}{1615.} {Springer-Verlag, Berlin} {1995.} {xii+131 pp.} 

\bibitem{Mass}\listing{Massey, D.}
{Numerical Control over Complex Analytic Singularities}{Mem.\ Amer.\ Math.\ Soc.} {163}{no.\ 778} {(2003)} 





\bibitem{Ras}\listing{Rashkovskii, A}
{Multi-circled Singularities, Lelong Numbers, and Integrability Index}
{J.\ Geom.\ Anal.}{23}{(2013)}{1976--1992}


\bibitem{RS}\listing{Rashkovskii, A \&  Sigurdsson, R} 
{Green functions with singularities along complex spaces}
{Internat.\ J.\ Math.} {16} {(2005)}{333--355}











\bibitem{Siu}\listing{Siu, Y.\ T.}{Analyticity of sets associated to Lelong numbers and the extension of closed positive currents}{Invent.\ Math.}{27}{(1974)}{53--156}   

\bibitem{Sk}\listing{Skoda, H.}{Sous-ensembles analytiques d'ordre
  fini ou infini dans
  $\C^n$}{Bull.\ Soc.\ Math.\ France}{100}{(1972)}{353--408} 







\bibitem{SV}\listing{St\"uckrad, J.; Vogel, W.}{An algebraic approach to the intersection theory}{Queen's Papers in Pure and Appl.\ Math.}{61}{(1982)}{1--32}  

\bibitem{T}
\listing{Tworzewski, P.}
{Intersection theory in complex analytic geometry}{Ann.\ Polon.\ Math.}{62}{(1995)}{177--191} 



\end{thebibliography}
\end{document}